\documentclass[11pt]{article}
\usepackage[a4paper,margin=1in]{geometry}
\usepackage{amsmath,amssymb,amsthm}
\usepackage{enumitem}
\usepackage{booktabs}
\usepackage{hyperref}
\usepackage{microtype}

\hypersetup{colorlinks=true,linkcolor=blue,citecolor=blue,urlcolor=blue}

\newtheorem{theorem}{Theorem}[section]
\newtheorem{proposition}[theorem]{Proposition}
\newtheorem{lemma}[theorem]{Lemma}

\newtheorem{definition}[theorem]{Definition}
\newtheorem{remark}[theorem]{Remark}

\title{Euler Constraints and the Cubic Criticality of Complete Bipartite Preference Structures}
\author{Yoshiteru Ishida}
\date{}

\begin{document}
\maketitle

\begin{abstract}
We introduce a polyhedral realizability problem for complete bipartite preference structures, the two-sided strict preference profiles that underlie the stable marriage problem. Given two parts $M$ and $W$ of size $k$, a strong polyhedral realization asks for a bipartite polyhedral graph with bipartition classes of size $k$ in which every vertex is adjacent to all but one vertex on the opposite side. This condition attempts to represent complete opposite-side preference capacity by direct polyhedral adjacency, with one geometrically exceptional opposite-side vertex for each agent. We prove an Euler-type criticality theorem: such a strong polyhedral realization exists if and only if $k=4$, and in that case the underlying graph is the cube graph $Q_3$, equivalently $K_{4,4}$ minus a perfect matching. The proof is the collision of the strong edge requirement $E=k(k-1)$ with the bipartite planar bound $E\le 4k-4$ and the minimum-degree constraint for polyhedral graphs. We then distinguish weak realizations. A family of $(k-1)$-gonal trapezohedra gives balanced bipartite polyhedral graphs with $2k$ vertices and $4k-4$ edges for all $k\ge4$, showing that weak maximal realizability persists beyond the cubic critical case. However, the Euler interval $3k\le E\le4k-4$ is not fully realizable: we give a direct proof that no bipartite polyhedral graph exists with $V=10$ and $E=15$, equivalently that there is no even triangulation of the sphere on seven vertices. Finally, in the cube case, we show that cube-distance compatibility of a size-four preference profile is equivalent to the existence of a perfect matching of mutually last-ranked pairs. This gives a first preference-theoretic manifestation of the cubic criticality.
\end{abstract}

\noindent\textbf{Keywords:} bipartite polyhedral graph; Euler formula; cube graph; trapezohedron; quadrangulation; two-sided preferences; stable marriage problem; graph realization.

\section{Introduction}

Euler's formula imposes severe capacity constraints on planar and polyhedral graphs. In this paper we study a small but structurally suggestive instance of this phenomenon: the attempt to realize complete bipartite preference structures by bipartite polyhedral graphs.

Let $M$ and $W$ be two disjoint sets, each of size $k$. A complete bipartite preference structure consists of a strict total ordering of $W$ for each element of $M$, and a strict total ordering of $M$ for each element of $W$. Such structures are the input profiles of the classical stable marriage problem, but the present paper focuses first on their polyhedral realizability rather than on stability algorithms.

The guiding question is the following. Can the complete two-sided nature of such a profile be represented by a three-dimensional bipartite polyhedral graph? The strongest version of this question asks for every vertex to be adjacent to all but one vertex of the opposite part. We call this a strong polyhedral realization. It creates a direct geometric analogue of the complete opposite-side set: for each vertex, all but one opposite-side agents are geometrically adjacent, while one is geometrically exceptional.

The cube immediately suggests itself. The cube graph $Q_3$ has eight vertices, is bipartite with four vertices in each part, and every vertex is adjacent to three vertices on the opposite side. Thus, for $k=4$, the cube realizes the pattern
\[
\text{four opposite-side vertices} = \text{three adjacent vertices} + \text{one non-adjacent vertex}.
\]
Our main theorem says that this is not an accident. The cube is the unique strong polyhedral realization.

The key numerical comparison is
\[
E_{\mathrm{strong}}=k(k-1),\qquad E_{\mathrm{bipartite\ planar}}\le 4k-4.
\]
The strong condition demands $k(k-1)$ edges, while bipartite planarity on $2k$ vertices permits at most $4k-4$ edges. Combined with the polyhedral minimum-degree condition, this forces $k=4$. We use the phrase \emph{cubic criticality} because the critical graph is simultaneously the cube and a 3-regular, or cubic, graph.

The paper also separates strong and weak realizability. Strong realizability collapses to the cube. Weak realizability, by contrast, persists: the $(k-1)$-gonal trapezohedron gives a balanced bipartite polyhedral graph with $2k$ vertices and $4k-4$ edges for every $k\ge4$. This family includes the cube at $k=4$.

At the same time, the Euler interval
\[
3k\le E\le 4k-4
\]
for balanced bipartite polyhedral graphs is not a full existence interval. We prove that the first nontrivial intermediate case, $k=5$ and $E=15$, is impossible. Under planar duality this is the small case asserting that there is no simple even triangulation of the sphere on seven vertices; this observation places the example within the standard theory of even triangulations and cubic bipartite planar graphs. Thus, beyond the main cubic criticality, polyhedral realizability has finer obstructions not captured by Euler inequalities alone.

Finally, we return to preference rankings. In the cube case, the unique opposite-side non-neighbor of each vertex may be interpreted as a geometrically farthest opposite-side agent. A profile is cube-distance-compatible if these non-neighbor pairs are exactly the mutually last-ranked pairs. We prove that this is equivalent to the existence of a perfect matching of mutually last-ranked pairs.

\section{Complete bipartite preference structures}

\begin{definition}[Complete bipartite preference structure]
Let $M$ and $W$ be disjoint finite sets with $|M|=|W|=k$. A complete bipartite preference structure of size $k$ is a tuple
\[
I=(M,W,\{\succ_m\}_{m\in M},\{\succ_w\}_{w\in W}),
\]
where each $\succ_m$ is a strict total order on $W$, and each $\succ_w$ is a strict total order on $M$.
\end{definition}

Equivalently, one may regard $I$ as a complete bipartite graph $K_{k,k}$ whose edge $(m,w)$ carries two directed rank labels,
\[
(r_m(w),r_w(m)),
\]
where $r_m(w)$ is the rank of $w$ in $m$'s list, and $r_w(m)$ is the rank of $m$ in $w$'s list.

The main graph-theoretic realization problem in this paper concerns the underlying two-part agent sets $M$ and $W$. The preference rankings are brought back in Section~\ref{sec:preference-manifestation}, where non-adjacency in the cube is related to last-ranked pairs.

\section{Bipartite polyhedral graphs and the Euler interval}

A polyhedral graph is the graph of a convex three-dimensional polyhedron. Equivalently, by Steinitz's theorem, it is a planar 3-connected graph. We consider bipartite polyhedral graphs with equal bipartition classes.

\begin{lemma}[Bipartite planar edge bound]
Let $G$ be a bipartite planar graph with $V\ge 3$ vertices and $E$ edges. Then
\[
E\le 2V-4.
\]
\end{lemma}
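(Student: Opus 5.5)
The plan is the standard face-counting argument via Euler's formula, with some care for degenerate cases such as forests and disconnected graphs. First I reduce to the connected case. If $G$ has connected components $G_1,\dots,G_c$ with $V_i$ vertices and $E_i$ edges, it suffices to bound each component and sum. A component with $V_i\ge 3$ will satisfy $E_i\le 2V_i-4$ by the connected argument below. A component with $V_i\le 2$ satisfies $E_i\le V_i-1\le 2V_i-2$, so small components need separate bookkeeping.

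A cleaner route avoids components. Adding edges while keeping the graph planar and bipartite only strengthens the hypothesis we must beat. So I first add edges joining components, for instance by connecting a vertex on the outer face of one component to a vertex on the outer face of another, chosen so as to preserve bipartiteness. Any two components can be joined by an edge between suitable colour classes, possibly after swapping the colouring of one component. This yields a connected bipartite planar graph on the same $V\ge3$ vertices with at least as many edges. Hence it suffices to treat connected $G$.

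Next I dispose of trees. If $G$ is connected and acyclic, then $E=V-1\le 2V-4$ exactly when $V\ge 3$, which is the hypothesis.

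Otherwise $G$ contains a cycle, and I fix a plane embedding with $F$ faces. By Euler's formula for connected plane graphs, $V-E+F=2$. Every face boundary walk has length at least $4$, because $G$ is bipartite (no odd cycles, in particular no triangles) and simple (no digons), and because $G$ contains a cycle, no face is bounded only by a tree-like walk of length less than $4$. Summing boundary walk lengths over all faces counts each edge exactly twice, so $4F\le 2E$, i.e.\ $F\le E/2$. Substituting into Euler's formula gives $2=V-E+F\le V-E/2$, hence $E\le 2V-4$.

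The only delicate step is the claim that every face has boundary walk length at least $4$. A face boundary need not be a cycle: it may traverse bridges twice. The key observation is that in a simple graph with a cycle, every face's boundary walk contains a cycle, and that cycle is even of length at least $4$ by bipartiteness. The walk length is at least the length of this cycle, which justifies $4F\le 2E$.
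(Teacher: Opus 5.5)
Your proof is correct and uses the same face-counting argument as the paper: every face has boundary length at least $4$, so $4F\le 2E$, and Euler's formula then gives $E\le 2V-4$. Your reduction to the connected case and your separate handling of trees and of face boundaries that are not cycles fill in details the paper's one-line proof leaves implicit. In particular, its use of $V-E+F=2$ tacitly assumes $G$ is connected, which is harmless for the polyhedral graphs where the lemma is applied.
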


\begin{proof}
Every face in a bipartite plane graph has length at least four. Hence $4F\le 2E$. Euler's formula $V-E+F=2$ gives $E\le 2V-4$.
\end{proof}

\begin{lemma}[Polyhedral minimum degree]
Every polyhedral graph has minimum degree at least three.
\end{lemma}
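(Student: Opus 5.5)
The plan is to argue directly from the graph-theoretic characterization of polyhedral graphs recalled above. By Steinitz's theorem, a polyhedral graph $G$ is planar and $3$-connected, and we will only use $3$-connectivity. We will also use that a polyhedral graph has at least four vertices, since it is the graph of a three-dimensional polytope, whose vertex set cannot lie in a plane. In particular $G$ has more than three vertices, so by definition of $3$-connectivity, deleting any two vertices leaves a connected graph.

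Fix a vertex $v$ and suppose, for contradiction, that $\deg v\le 2$. We will separate into cases.

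\emph{Case $\deg v\ge 1$.} Let $N(v)$ be its neighborhood, so $|N(v)|\le 2$. Since $|V(G)|\ge 4$, the graph $G-N(v)$ contains $v$ together with at least one further vertex $u\neq v$. In $G-N(v)$ the vertex $v$ is isolated, so $G-N(v)$ is disconnected. Thus $N(v)$ is a separating set of size at most two, contradicting $3$-connectivity.

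\emph{Case $\deg v=0$.} Then $G$ itself is disconnected, which is again a contradiction.

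Hence $\deg v\ge 3$ for every vertex $v$.

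The only point needing care is the vertex count: $3$-connectivity is defined only for graphs with more than three vertices, and the separating-set argument needs a vertex outside $N(v)\cup\{v\}$. Both requirements are met by noting that a convex three-dimensional polytope has at least four vertices.

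A geometric alternative avoids Steinitz's theorem altogether. Each vertex of a $3$-polytope is the apex of a vertex figure, which is a polygon with at least three sides. Each side of the vertex figure corresponds to an incident edge, so every vertex has at least three incident edges. I would present the connectivity argument as the main proof and mention the vertex-figure argument as a remark. I expect no real obstacle here.
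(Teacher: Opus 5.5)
Your proposal is correct and follows the same route as the paper, which simply observes that $3$-connectivity rules out any vertex of degree at most two (its neighborhood would be a separating set of size at most two). You supply details the paper leaves implicit, namely the bound $|V|\ge 4$ and the isolated-vertex case, and your vertex-figure remark is a valid independent argument.
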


\begin{proof}
A polyhedral graph is 3-connected, hence no vertex can have degree at most two.
\end{proof}

\begin{proposition}[Euler interval]
Let $G$ be a bipartite polyhedral graph with bipartition classes of size $k$ and edge number $E$. Then
\[
\boxed{3k\le E\le 4k-4.}
\]
\end{proposition}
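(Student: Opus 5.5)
The plan is to get each side of the boxed inequality from one of the two preceding lemmas, taking $V=2k$ throughout.

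For the upper bound, I would note that $G$ is planar, since it is polyhedral, and bipartite by hypothesis. It has $V=2k$ vertices, which is at least $3$ as soon as $k\ge 2$. The Bipartite planar edge bound lemma then gives directly
\[
E\le 2V-4=2(2k)-4=4k-4.
\]

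For the lower bound, I would avoid the handshake lemma over all vertices, which gives only $2E\ge 3V=6k$, i.e.\ $E\ge 3k$. That happens to be the same bound, but the bipartite structure gives a cleaner count. Every edge of $G$ has exactly one endpoint in the class $M$, so
\[
E=\sum_{m\in M}\deg(m).
\]
By the Polyhedral minimum degree lemma each summand is at least $3$, and $|M|=k$, so $E\ge 3k$. Either route works, and I would present the one-sided count because it reflects the bipartite structure.

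The only point needing care, and the closest thing to an obstacle, is the range of $k$. I would have to confirm that $V\ge 3$, so that the planar bound applies, and that a polyhedral graph has at least four vertices. The latter holds because a polyhedral graph is the graph of a convex $3$-polytope. Together these force $2k\ge 4$, so $k\ge 2$. In fact $3k\le 4k-4$ forces $k\ge 4$, which is consistent with the cube being the smallest example. Apart from this remark, the proof is a direct combination of the two lemmas.
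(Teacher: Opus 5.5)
Your proposal is correct and matches the paper's proof: the upper bound is the bipartite planar edge bound applied with $V=2k$, and your one-sided count $E=\sum_{m\in M}\deg(m)\ge 3k$ is an equivalent variant of the paper's handshake count $2E\ge 3(2k)$. Your check that $V\ge 3$, so that the planar bound applies, is a small step the paper leaves implicit.
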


\begin{proof}
The graph has $2k$ vertices and minimum degree at least three, so $2E\ge 3(2k)$ and $E\ge3k$. The bipartite planar edge bound gives $E\le2(2k)-4=4k-4$.
\end{proof}

The interval is nonempty precisely when $k\ge4$. Thus $k=4$ is the first possible size for balanced bipartite polyhedral graphs.

\section{Strong polyhedral realization}

\begin{definition}[Strong polyhedral realization]
Let $I$ be a complete bipartite preference structure of size $k$. A strong polyhedral realization of its underlying two-sided agent structure consists of a bipartite polyhedral graph
\[
G=(V_M\sqcup V_W,E)
\]
with $|V_M|=|V_W|=k$, together with bijections $M\to V_M$ and $W\to V_W$, such that each vertex is adjacent to exactly $k-1$ vertices of the opposite bipartition class.
\end{definition}

Thus each agent has exactly one geometrically non-adjacent opposite-side agent. The preference rankings are not required to determine the realization at this stage; the strong realization condition concerns the polyhedral capacity of the underlying complete bipartite agent structure.

Under the strong condition the graph is $(k-1)$-regular on $2k$ vertices, hence
\[
E_{\mathrm{strong}}=k(k-1).
\]
Since non-adjacency is symmetric and each vertex misses exactly one opposite-side vertex, the missing opposite-side pairs form a perfect matching. Equivalently, the underlying graph is $K_{k,k}$ with a perfect matching deleted.

\begin{theorem}[Cubic criticality theorem]
A strong polyhedral realization exists if and only if $k=4$. In that case, the underlying graph is isomorphic to the cube graph $Q_3$.
\end{theorem}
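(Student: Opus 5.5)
The plan is to squeeze the strong edge count $E_{\mathrm{strong}}=k(k-1)$ between the two ends of the Euler interval, and then handle $k=4$ separately for existence and uniqueness.

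\emph{Necessity.} Suppose a strong polyhedral realization of size $k$ exists. Its graph $G$ is a bipartite polyhedral graph with classes of size $k$, so the Euler interval proposition applies to $E=k(k-1)$. The upper bound $k(k-1)\le 4k-4$ rewrites as $(k-1)(k-4)\le 0$, which gives $1\le k\le 4$. For the lower bound, $G$ is $(k-1)$-regular, and the polyhedral minimum degree lemma then forces $k-1\ge 3$, that is $k\ge4$. (Equivalently, $k(k-1)\ge 3k$.) Together these give $k=4$. The only care needed here is the degenerate small cases $k\le 3$. The lemmas assume a genuine polyhedral graph, which has at least four vertices, and for $k\le 3$ the graph $K_{k,k}$ minus a perfect matching has maximum degree at most $2$, so it cannot be polyhedral. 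The degree argument rules these out directly.

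\emph{Structure and sufficiency at $k=4$.} As observed before the theorem, the missing opposite-side pairs form a perfect matching, so $G\cong K_{4,4}$ minus a perfect matching. I would then show that this graph is isomorphic to $Q_3$. Label $Q_3$ by vectors in $\{0,1\}^3$. The bipartition is by parity of weight, and each vertex $x$ is non-adjacent on the opposite side exactly to its antipode $x+(1,1,1)$, since antipodes differ in three coordinates, which is odd. It is adjacent to the other three opposite-parity vertices, since those are at Hamming distance $1$. Thus $Q_3$ is $K_{4,4}$ minus the antipodal perfect matching. Any two perfect matchings of $K_{4,4}$ are equivalent under relabelling one side, so every such $G$ is isomorphic to $Q_3$.

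Existence follows because $Q_3$ is the graph of the cube, hence polyhedral; equivalently, it is planar and 3-connected, so Steinitz's theorem applies. Any bijections $M\to V_M$ and $W\to V_W$ onto the two parity classes then give a strong realization. The main step is really only the inequality collision, and the modest obstacle is making the isomorphism $K_{4,4}-\text{(perfect matching)}\cong Q_3$ explicit. The antipodal description handles this cleanly.
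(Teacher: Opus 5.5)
Your proposal is correct and follows the paper's proof essentially step for step. It uses the collision $k(k-1)\le 4k-4$ to get $k\le 4$, the minimum-degree bound $k-1\ge 3$ to get $k\ge 4$, and the parity/antipodal labelling of $\{0,1\}^3$ to identify $K_{4,4}$ minus a perfect matching with $Q_3$; your explicit checks that $Q_3$ is polyhedral and that the cases $k\le 3$ are excluded by the degree argument are small points of care the paper leaves implicit.
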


\begin{proof}
In a strong realization, $E=k(k-1)$. Since the graph is bipartite planar on $2k$ vertices,
\[
k(k-1)=E\le4k-4,
\]
so $(k-1)(k-4)\le0$ and therefore $k\le4$ for positive integer $k$. Since the graph is polyhedral, its minimum degree is at least three. But the strong degree is $k-1$, so $k-1\ge3$, hence $k\ge4$. Thus $k=4$.

For $k=4$, the graph is $K_{4,4}$ minus a perfect matching. This graph is isomorphic to $Q_3$ by Proposition~\ref{prop:k44cube}. Conversely, $Q_3$ is bipartite with four vertices in each part and degree three, and hence gives a strong realization for $k=4$.
\end{proof}

\begin{proposition}\label{prop:k44cube}
For any perfect matching $P$ of $K_{4,4}$,
\[
K_{4,4}\setminus P\cong Q_3.
\]
Equivalently, the crown graph $S^0_4$ is the cube graph.
\end{proposition}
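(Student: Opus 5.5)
We reduce to one perfect matching and then exhibit an explicit isomorphism. First, any two perfect matchings $P,P'$ of $K_{4,4}$ with classes $A=\{a_1,\dots,a_4\}$ and $B=\{b_1,\dots,b_4\}$ are equivalent under an automorphism of $K_{4,4}$. Write $P=\{a_ib_{\sigma(i)}\}$ and $P'=\{a_ib_{\tau(i)}\}$ for permutations $\sigma,\tau$. The map fixing $A$ pointwise and sending $b_{\sigma(i)}\mapsto b_{\tau(i)}$ preserves $K_{4,4}$ and carries $P$ onto $P'$. It therefore induces an isomorphism $K_{4,4}\setminus P\cong K_{4,4}\setminus P'$, and it suffices to treat the single matching $P_0=\{a_ib_i\}$. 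That is, we need the crown graph $S^0_4$ with $a_i\sim b_j$ iff $i\ne j$.

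Next we identify $Q_3$ with this crown graph. Take the vertex set $\{0,1\}^3$, with adjacency meaning Hamming distance $1$. The bipartition is by parity of weight, $A=\{000,011,101,110\}$ (even) and $B=\{111,100,010,001\}$ (odd). Pair each even vertex with its antipode (complement): $a_1=000\leftrightarrow b_1=111$, $a_2=011\leftrightarrow b_2=100$, $a_3=101\leftrightarrow b_3=010$, $a_4=110\leftrightarrow b_4=001$.

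For even $x$ and odd $y$, the Hamming distance $d(x,y)$ is odd, hence $1$ or $3$. Distance $3$ occurs exactly when $y=\bar x$. So $a_i$ is adjacent to $b_j$ iff $b_j\ne\bar a_i$, i.e.\ iff $i\ne j$. This gives a bijection $Q_3\to K_{4,4}\setminus P_0$ preserving adjacency and non-adjacency. Both graphs have $12$ edges, which serves as a consistency check. Combined with the first paragraph, this yields the proposition for every $P$.

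There is no real obstacle here. The only point needing care is the parity observation that the distance between opposite-class cube vertices lies in $\{1,3\}$, with $3$ occurring only for antipodes. That observation is exactly what makes the unique non-neighbor of each vertex its antipode, which is also the interpretation used later in Section~\ref{sec:preference-manifestation}.
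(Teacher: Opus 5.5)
Your proof is correct and follows essentially the same route as the paper: you label $Q_3$ by $\{0,1\}^3$ with the parity bipartition, identify the bitwise complement as the unique opposite-side non-neighbor, and reduce an arbitrary matching to a single one by relabeling. The only difference is that you spell out two steps the paper leaves implicit, namely the explicit automorphism of $K_{4,4}$ carrying $P$ to $P'$ and the Hamming-distance parity argument that forces opposite-class distances to lie in $\{1,3\}$.
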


\begin{proof}
Label the vertices of $Q_3$ by binary strings in $\{0,1\}^3$. Let $A$ be the set of even-parity strings and $B$ the set of odd-parity strings. The cube graph is bipartite with parts $A$ and $B$, and an even string is adjacent to the three odd strings obtained by changing exactly one coordinate. The unique odd string not adjacent to a given even string is its bitwise complement, which differs in all three coordinates. Thus the missing opposite-side edges form a perfect matching. Hence $Q_3$ is $K_{4,4}$ minus a perfect matching. Since all perfect matchings in $K_{4,4}$ are equivalent under relabeling, the conclusion follows.
\end{proof}

\section{Weak maximal realizations by trapezohedra}

Strong realization is unique to $k=4$, but weaker balanced bipartite polyhedral realizations exist for every $k\ge4$.

\begin{definition}[Weak polyhedral realization]
A weak polyhedral realization of a complete bipartite preference structure of size $k$ consists of a bipartite polyhedral graph with two bipartition classes of size $k$, together with bijections from $M$ and $W$ to the two classes. No requirement is imposed that each vertex be adjacent to $k-1$ opposite-side vertices.
\end{definition}

The following family attains the upper edge bound in the Euler interval.

\begin{definition}[$n$-gonal trapezohedron graph]
For $n\ge3$, let $T_n$ be the graph with vertices
\[
\{N,S,x_0,x_1,\ldots,x_{2n-1}\},
\]
where $x_0x_1\cdots x_{2n-1}x_0$ is a cycle, $N$ is adjacent to $x_i$ for even $i$, and $S$ is adjacent to $x_i$ for odd $i$. Indices are taken modulo $2n$.
\end{definition}

The graph $T_n$ is the graph of the standard $n$-gonal trapezohedron, the dual of the $n$-gonal antiprism; see, for example, the standard polytope references on convex polyhedra and their duals \cite{Grunbaum2003,Ziegler1995,Cromwell1997}. It is a quadrangulation: its faces are the cycles
\[
N-x_{2j}-x_{2j+1}-x_{2j+2}-N
\]
and
\[
S-x_{2j+1}-x_{2j+2}-x_{2j+3}-S,
\]
for $j=0,\ldots,n-1$.

\begin{proposition}[Weak maximal realizability]
For every $k\ge4$, there exists a bipartite polyhedral graph with bipartition classes of size $k$ and edge number $4k-4$.
\end{proposition}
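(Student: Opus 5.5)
The plan is to take $G=T_{k-1}$, the $(k-1)$-gonal trapezohedron graph, and check directly that it has every property the statement asks for. Write $n=k-1\ge3$. Then $T_n$ has $2n+2=2k$ vertices. Its edges are the $2n$ cycle edges plus the $n$ spokes at $N$ and the $n$ spokes at $S$, so
\[
E=4n=4k-4.
\]
This is exactly the upper end of the Euler interval.

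\textbf{Bipartiteness and balance.} I will set $V_1=\{N\}\cup\{x_i: i \text{ odd}\}$ and $V_2=\{S\}\cup\{x_i: i\text{ even}\}$. The checks are as follows.
\begin{itemize}[nosep]
\item Each cycle edge $x_ix_{i+1}$ joins indices of opposite parity. This is consistent modulo $2n$ because $2n$ is even.
\item $N$ is joined only to even-indexed $x_i$, which lie in $V_2$.
\item $S$ is joined only to odd-indexed $x_i$, which lie in $V_1$.
\end{itemize}
Each class has $1+n=k$ vertices.

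\textbf{Polyhedrality.} I will use Steinitz's theorem as recalled in the paper, so it suffices to show $T_n$ is planar and 3-connected.
\begin{itemize}[nosep]
\item \emph{Planarity.} Draw the cycle as a circle, put $N$ inside joined to the even $x_i$, and put $S$ outside joined to the odd $x_i$. No edges cross. The resulting faces are the $2n$ quadrilaterals listed above, and Euler's formula checks out: $(2n+2)-4n+2n=2$.
\item \emph{3-connectivity.} Alternatively, one can cite that $T_n$ is the dual of the $n$-antiprism, which is a convex polyhedron; the dual of a convex polytope is a convex polytope. For a self-contained argument I will instead show that deleting any two vertices $u,v$ leaves the graph connected. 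There are three cases:
 \begin{enumerate}[nosep]
 \item $\{u,v\}=\{N,S\}$: the cycle $x_0\cdots x_{2n-1}$ remains.
 \item Exactly one of $N,S$ is deleted, say $N$, together with some $x_j$: the cycle minus $x_j$ is a path, and $S$ is attached to it.
 \item Neither $N$ nor $S$ is deleted: the cycle splits into at most two arcs. If an arc has length at least $2$, it contains both an even and an odd index, so it touches both $N$ and $S$. If an arc is a single vertex, it touches $N$ or $S$. Since $n\ge3$, each of $N$ and $S$ has at least three neighbours, so neither loses all of them after two deletions. Hence $N$ and $S$ each keep a neighbour. The remaining concern is whether $N$ and $S$ are linked to each other; they are, because some surviving arc has length at least $2$ (the cycle has $2n-2\ge4$ remaining vertices spread over at most two arcs).
 \end{enumerate}
\end{itemize}

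I expect the 3-connectivity case analysis to be the only step needing care, specifically case (iii) when two cycle vertices are removed. I will handle it with the counting argument above: at least $4$ remaining vertices lie in at most two arcs, so some arc has length at least $2$ and joins $N$ to $S$, and every other arc touches $N$ or $S$. Finally, for $k=4$ the graph $T_3$ is the cube, which is consistent with the preceding cubic criticality theorem.
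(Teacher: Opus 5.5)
Your proof is correct and follows the paper's argument. You use the same $(k-1)$-gonal trapezohedron $T_{k-1}$, the same vertex and edge count, and the same parity bipartition; the only difference is that the paper justifies polyhedrality by citing that $T_n$ is the graph of the convex trapezohedron, while you check planarity and 3-connectivity directly and invoke Steinitz's theorem, which makes that step self-contained.
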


\begin{proof}
Set $n=k-1$ and take the trapezohedron graph $T_n$. It has
\[
V=2n+2=2k
\]
vertices. Its edge set consists of the $2n$ equatorial cycle edges, $n$ edges from $N$ to the even equatorial vertices, and $n$ edges from $S$ to the odd equatorial vertices. Thus
\[
E=2n+n+n=4n=4k-4.
\]
It is bipartite with parts
\[
\{N,x_1,x_3,\ldots,x_{2n-1}\}
\]
and
\[
\{S,x_0,x_2,\ldots,x_{2n-2}\},
\]
each of size $n+1=k$. The displayed quadrilateral faces give a planar quadrangulation with all faces of length four, hence it attains the bipartite planar upper bound $E=2V-4$. As the graph of the convex $n$-gonal trapezohedron, it is polyhedral. Hence it is a weak balanced bipartite polyhedral realization attaining the upper Euler bound.
\end{proof}

For $k=4$, $n=3$ and $T_3$ is the cube graph. Thus the cube appears both as the unique strong realization and as the first member of the trapezohedral weak maximal family.

\begin{table}[h]
\centering
\begin{tabular}{ccccc}
\toprule
$k$ & $n=k-1$ & weak maximal graph & $V$ & $E=4k-4$ \\
\midrule
4 & 3 & triangular trapezohedron / cube & 8 & 12 \\
5 & 4 & 4-gonal trapezohedron & 10 & 16 \\
6 & 5 & 5-gonal trapezohedron & 12 & 20 \\
7 & 6 & 6-gonal trapezohedron & 14 & 24 \\
8 & 7 & 7-gonal trapezohedron & 16 & 28 \\
9 & 8 & 8-gonal trapezohedron & 18 & 32 \\
10 & 9 & 9-gonal trapezohedron & 20 & 36 \\
\bottomrule
\end{tabular}
\caption{Weak maximal bipartite polyhedral realizations supplied by trapezohedron graphs.}
\end{table}

\section{The Euler interval is not always realizable}

The Euler interval
\[
3k\le E\le4k-4
\]
is necessary, but it is not a complete existence criterion. The first intermediate case already fails.

\begin{theorem}[First empty interval case]
There is no bipartite polyhedral graph with $V=10$ vertices and $E=15$ edges. Equivalently, for $k=5$, the lower endpoint $E=3k=15$ of the Euler interval is not realizable. Dually, this is the nonexistence of an even triangulation of the sphere on seven vertices.
\end{theorem}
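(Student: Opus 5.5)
Suppose $G$ is a bipartite polyhedral graph with $V=10$ and $E=15$. By Lemma (Polyhedral minimum degree), every vertex has degree at least three, and $2E=30=3V$, so $G$ is cubic. Euler's formula gives $F=2-V+E=7$ faces. Every face is a cycle of even length at least four, and $\sum_f |f| = 2E = 30$. The plan is to pass to the dual $G^*$ and show that the configuration is impossible. I expect the obstacle to sit in a parity count on the face-adjacency structure.

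\emph{Step 1 (dual reformulation).} Because $G$ is 3-connected, planar and cubic, its dual $G^*$ is a simple triangulation of the sphere on $7$ vertices, with $15$ edges and $10$ faces. Because $G$ is bipartite, every face of $G$ has even length, so every vertex of $G^*$ has even degree. Degrees in $G^*$ equal face lengths in $G$, hence each is at least four. So $G^*$ is a simple even triangulation on seven vertices, which is the dual statement in the theorem. The degrees of $G^*$ sum to $30$ over $7$ vertices, all even and at least $4$. Writing them as $4+2a_i$ gives $\sum a_i = 1$, so the degree sequence is forced to be $(6,4,4,4,4,4,4)$.

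\emph{Step 2 (local structure).} Let $v$ be the vertex of degree $6$. Then $v$ is adjacent to all six other vertices, and its link is a $6$-cycle $C=u_1\cdots u_6$. Removing $v$ and its six triangles leaves a triangulated hexagonal disk with boundary $C$ and no interior vertices. This disk has $10-6=4$ triangles, and the $15-6-6=3$ remaining edges are chords of $C$. Each $u_i$ has degree $4$ in $G^*$. It already has degree $3$ from $v$, $u_{i-1}$ and $u_{i+1}$, so it must be incident to exactly one chord. The $3$ chords, having $6$ endpoints in total, therefore form a perfect matching of $C$ by pairwise non-crossing chords.

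\emph{Step 3 (contradiction).} A non-crossing perfect matching by chords of a hexagon either contains a short chord $u_iu_{i+2}$, or consists of long chords. A short chord $u_iu_{i+2}$ leaves $u_{i+1}$ with no possible chord partner, since any chord at $u_{i+1}$ would cross it. Long chords $u_iu_{i+3}$ pairwise cross, so at most one can appear, and a matching needs three chords. Either way no such matching exists, which is the contradiction. This step is the expected crux, though it is only a short case check. A cleaner alternative is to note that a triangulated hexagon always has an ear vertex with no chord, which contradicts every $u_i$ having degree $4$. I would use the ear argument: a triangulation of a polygon with at least four sides has at least two ears, i.e. boundary vertices incident to no diagonal.

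Finally, I would remark that simplicity of $G^*$ (needed for the chord counting) follows from 3-connectivity of $G$. Likewise, the link of $v$ is a cycle because $G^*$ is a simple triangulation.
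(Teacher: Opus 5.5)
Your proof is correct and follows essentially the same route as the paper. Both arguments pass to the dual simple even triangulation on seven vertices, force the degree sequence $(6,4,4,4,4,4,4)$, reduce to a triangulated hexagon whose three diagonals must form a perfect matching, and contradict this with the two-ears property; your short-versus-long chord case check is a valid self-contained alternative to citing the ears theorem.
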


\begin{proof}
Suppose such a graph $G$ exists. Since $G$ is polyhedral, its minimum degree is at least three. Since $V=10$ and $E=15$, the degree sum is $2E=30=3V$. Hence every vertex has degree exactly three, so $G$ is cubic.

Euler's formula gives
\[
F=2-V+E=2-10+15=7.
\]
Since $G$ is a polyhedral graph, it is simple and 3-connected, and its planar dual $G^*$ is again simple and 3-connected. The dual graph $G^*$ is therefore a planar triangulation on seven vertices. Since $G$ is bipartite, all faces of $G$ have even length, so all vertices of $G^*$ have even degree. Thus $G^*$ is an even, or Eulerian, triangulation. The degree sum in $G^*$ is $2E=30$. In a triangulation, every vertex has degree at least three; here degrees are even, so every degree is at least four. The only way to write $30$ as the sum of seven even integers at least four is
\[
(6,4,4,4,4,4,4).
\]
Thus $G^*$ would have one vertex of degree six and six vertices of degree four.

Let $v$ be the degree-six vertex of $G^*$. Since $G^*$ is a simple 3-connected triangulation, the link of $v$ is a cycle; its six neighbors occur in cyclic order around $v$ and form a cycle $C_6$. Since $|V(G^*)|=7=1+6$, the vertex $v$ together with its six neighbors exhausts the vertex set of $G^*$. Hence the region outside the cycle $C_6$ contains no additional vertices and must be triangulated by diagonals of the hexagon alone. A triangulation of a hexagon has exactly three internal diagonals.

Each vertex of $C_6$ already has two incident edges on $C_6$ and one incident edge to $v$. Since its required degree in $G^*$ is four, each boundary vertex must be incident with exactly one internal diagonal of the hexagon. Hence the three internal diagonals would have to form a perfect matching on the six boundary vertices.

However, no set of three non-crossing diagonals forming a perfect matching on the six vertices of a hexagon can triangulate the hexagon. Indeed, by the two ears theorem for polygon triangulations \cite{Meisters1975}, every triangulation of a polygon with more than three vertices has at least two ears, and an ear vertex is incident with no internal diagonal. This contradicts the requirement that every boundary vertex of $C_6$ be incident with exactly one internal diagonal. Therefore no such triangulation $G^*$ exists, and hence no such graph $G$ exists.
\end{proof}

\begin{remark}
This theorem shows that Euler inequalities give a necessary capacity interval, but not a complete characterization of balanced bipartite polyhedral realizability. In dual language, the obstruction is the absence of an even triangulation on seven vertices. This places the example within the established theory of even triangulations and cubic bipartite planar graphs; see, for example, \cite{Batagelj1989,MatsumotoNakamoto2015,BrinkmannMcKay2007}. Thus the weak realization problem contains finer polyhedral obstructions beyond the strong cubic criticality theorem.
\end{remark}

\section{Preference-theoretic manifestation: mutually last-ranked pairs}\label{sec:preference-manifestation}

We now connect the cubic critical graph to the preference rankings. In the cube, every vertex has a unique non-neighbor on the opposite side. This non-adjacency relation is a perfect matching between the two bipartition classes.

\begin{definition}[Cube-distance compatibility]
A size-four complete bipartite preference structure is cube-distance-compatible if there is a strong cube realization such that, for every agent, the unique opposite-side non-neighbor is ranked last by that agent.
\end{definition}

\begin{definition}[Mutually last-ranked pair]
A pair $(m,w)\in M\times W$ is mutually last-ranked if $w$ is last in $m$'s preference order and $m$ is last in $w$'s preference order.
\end{definition}

\begin{proposition}[Characterization of cube-distance compatibility]
A size-four complete bipartite preference structure is cube-distance-compatible if and only if it has a perfect matching of mutually last-ranked pairs.
\end{proposition}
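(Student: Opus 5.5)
The argument proves the two directions separately. Both rest on the structural fact, established before Proposition~\ref{prop:k44cube} and in the Cubic criticality theorem, that in any strong cube realization the opposite-side non-adjacency pairs form a perfect matching between $V_M$ and $V_W$. A strong cube realization with bijections $\phi:M\to V_M$ and $\psi:W\to V_W$ therefore determines a perfect matching $P_\phi$ of $M\times W$: we put $(m,w)\in P_\phi$ exactly when $\phi(m)$ and $\psi(w)$ are non-adjacent.

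For the forward direction, we fix a realization witnessing cube-distance compatibility and take $P_\phi$ as above. For each $(m,w)\in P_\phi$, the vertex $\psi(w)$ is the unique opposite-side non-neighbor of $\phi(m)$, so by compatibility $w$ is last in $\succ_m$. Symmetrically, $\phi(m)$ is the unique opposite-side non-neighbor of $\psi(w)$, so $m$ is last in $\succ_w$. Hence every pair in $P_\phi$ is mutually last-ranked, and $P_\phi$ is a perfect matching of such pairs.

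For the converse, we suppose $P=\{(m_i,w_i)\}_{i=1}^4$ is a perfect matching of mutually last-ranked pairs. We use the explicit labeling from the proof of Proposition~\ref{prop:k44cube}. Let the even-parity strings be $a_1,\dots,a_4$, and let $\bar a_i$ denote the bitwise complement of $a_i$; these are exactly the odd-parity strings. We define $\phi(m_i)=a_i$ and $\psi(w_i)=\bar a_i$. Since $Q_3$ is bipartite with parts of size four and is $3$-regular, this is a strong realization. In it, the unique non-neighbor of $\phi(m_i)$ is $\bar a_i=\psi(w_i)$, and $w_i$ is last for $m_i$ because $(m_i,w_i)$ is mutually last-ranked. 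Likewise, the unique non-neighbor of $\psi(w_i)$ is $\phi(m_i)$, and $m_i$ is last for $w_i$. So the structure is cube-distance-compatible.

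The only step needing care is the converse construction: the bijections must be chosen so that the cube's non-adjacency matching coincides with the prescribed matching $P$. The complement pairing from Proposition~\ref{prop:k44cube} does exactly this, because any bijection between the four even strings and $M$ can be extended to $W$ along the complements. Nothing else is delicate. As a side observation, mutual last-ranking automatically makes the pairs disjoint, since each agent has a unique last choice. The hypothesis is therefore equivalent to saying that every $m$ is mutually last-ranked with someone.
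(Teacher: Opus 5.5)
Your proof is correct and follows the paper's argument. In the forward direction you read off the cube's non-adjacency matching as a matching of mutually last-ranked pairs. In the converse you place the agents so that the prescribed matching coincides with the non-neighbor matching, making the paper's placement explicit via the even-string/complement pairing from the proof of Proposition~\ref{prop:k44cube}.
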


\begin{proof}
If a cube-distance-compatible realization is given, then the opposite-side non-neighbor relation in the cube is a perfect matching. By compatibility, each non-neighbor pair is mutually last-ranked.

Conversely, suppose the profile has a perfect matching $P$ of mutually last-ranked pairs. Since the non-neighbor relation between the two bipartition classes of $Q_3$ is a perfect matching, place the agents on the cube so that the pairs in $P$ are exactly the non-neighbor pairs. Then each agent ranks its unique non-neighbor last, so the profile is cube-distance-compatible.
\end{proof}

For a fixed mutually last-ranked perfect matching, each agent may order the remaining three opposite-side agents arbitrarily. Thus the number of labeled cube-distance-compatible size-four profiles is
\[
4!(3!)^8.
\]
This count is included only to illustrate the size of the compatible subfamily; a full isomorphism classification is left for later work.

\section{Discussion}

The main result identifies the cube as a critical mediator between complete bipartite preference capacity and polyhedral geometry. The strong condition attempts to realize, for each vertex, all but one opposite-side options by direct adjacency. Euler constraints permit this only at $k=4$, where the graph is $Q_3$.

Weak realizations behave differently. The trapezohedron family shows that maximal weak balanced bipartite polyhedral graphs exist for every $k\ge4$. However, the nonexistence of a bipartite polyhedral graph with $(V,E)=(10,15)$ shows that the Euler interval is not sufficient. This separates three levels: strong realization, weak maximal realization, and intermediate weak realizability.

There is also a natural duality picture. The cube is dual to the octahedron. More generally, the $n$-gonal trapezohedron is dual to the $n$-gonal antiprism. Thus the present construction may be viewed either in terms of bipartite quadrangulations or, dually, in terms of Eulerian triangulations.

The preference-theoretic section shows that the cube criticality has a direct rank manifestation: the non-adjacency matching of the cube corresponds precisely to mutually last-ranked pairs. Further refinements may study automorphism groups, side exchange, and stable-matching invariants of cube-compatible profiles, but those questions are not needed for the present polyhedral realizability theorem.

\section{Conclusion}

We formulated a polyhedral realizability problem for complete bipartite preference structures. The strong version has a unique critical solution: it exists if and only if $k=4$, and the underlying graph is the cube graph $Q_3$. The proof is an Euler-type capacity obstruction comparing the strong edge requirement $k(k-1)$ with the bipartite planar upper bound $4k-4$ and the polyhedral minimum-degree condition.

We also showed that weak maximal realizations exist for all $k\ge4$ via trapezohedron graphs, while the Euler interval is not fully realizable, as demonstrated by the nonexistence of a bipartite polyhedral graph with $V=10$ and $E=15$. Finally, in the critical cube case, we characterized distance-compatible preference profiles by the existence of a perfect matching of mutually last-ranked pairs.


\begin{thebibliography}{9}

\bibitem{GaleShapley1962}
D. Gale and L. S. Shapley, College admissions and the stability of marriage, \emph{American Mathematical Monthly} 69 (1962), 9--15.

\bibitem{GusfieldIrving1989}
D. Gusfield and R. W. Irving, \emph{The Stable Marriage Problem: Structure and Algorithms}, MIT Press, 1989.

\bibitem{Diestel2017}
R. Diestel, \emph{Graph Theory}, 5th ed., Graduate Texts in Mathematics, Springer, 2017.

\bibitem{BondyMurty2008}
J. A. Bondy and U. S. R. Murty, \emph{Graph Theory}, Graduate Texts in Mathematics, Springer, 2008.

\bibitem{Grunbaum2003}
B. Gr\"unbaum, \emph{Convex Polytopes}, 2nd ed., Graduate Texts in Mathematics, Springer, 2003.

\bibitem{Ziegler1995}
G. M. Ziegler, \emph{Lectures on Polytopes}, Graduate Texts in Mathematics, Springer, 1995.

\bibitem{Steinitz1922}
E. Steinitz, Polyeder und Raumeinteilungen, in \emph{Encyklop\"adie der mathematischen Wissenschaften}, 1922.

\bibitem{Harary1969}
F. Harary, \emph{Graph Theory}, Addison-Wesley, 1969.

\bibitem{Batagelj1989}
V. Batagelj, An improved inductive definition of two restricted classes of triangulations of the plane, \emph{Banach Center Publications} 25 (1989), 11--18.

\bibitem{MatsumotoNakamoto2015}
N. Matsumoto and A. Nakamoto, Generating 4-connected even triangulations on the sphere, \emph{Discrete Mathematics} 338 (2015), 64--70.

\bibitem{BrinkmannMcKay2007}
G. Brinkmann and B. D. McKay, Fast generation of planar graphs, \emph{MATCH Communications in Mathematical and in Computer Chemistry} 58 (2007), 323--357.

\bibitem{Meisters1975}
G. H. Meisters, Polygons have ears, \emph{American Mathematical Monthly} 82 (1975), 648--651.

\bibitem{Cromwell1997}
P. R. Cromwell, \emph{Polyhedra}, Cambridge University Press, 1997.
\end{thebibliography}
\end{document}